\newcommand{\Hmm}[1]{\leavevmode{\marginpar{\tiny%
$\hbox to 0mm{\hspace*{-0.5mm}$\leftarrow$\hss}%
\vcenter{\vrule depth 0.1mm height 0.1mm width \the\marginparwidth}%
\hbox to
0mm{\hss$\rightarrow$\hspace*{-0.5mm}}$\\\relax\raggedright #1}}}
\newtheorem{theorem}{Theorem}
\newtheorem{thm}{Theorem}[section]
\newtheorem{lemma}[thm]{Lemma}
\newtheorem{pro}[thm]{Proposition}
\theoremstyle{definition}
\newtheorem{rem}[thm]{Remark}
\numberwithin{equation}{section}
\newcommand{\R}{{\mathbb R}}
\newcommand{\N}{{\mathbb N}}
\newcommand{\ph}{{\varphi}}
\begin{document}

\title{An improved discrete Hardy inequality}
\markright{Improved Hardy inequality}
\author[M.~Keller]{Matthias Keller}
\address{M.~Keller,  Institut f\"ur Mathematik, Universit\"at Potsdam
\\14476  Potsdam, Germany}
\email{mkeller@math.uni-potsdam.de}
\author[Y.~Pinchover]{Yehuda Pinchover}
\author[F.~Pogorzelski]{Felix Pogorzelski}
\address{F.~Pogorzelski and Y.~Pinchover,  Department of Mathematics, Technion-Israel Institute of Technology, 32000 Haifa, Israel}
\email{pincho@technion.ac.il}
 \email{felixp@technion.ac.il}

\maketitle

\begin{abstract}
We improve the classical discrete Hardy inequality
\begin{equation*}\label{1}
\sum _{{n=1}}^{\infty }a_{n}^{2}\geq \left({\frac  {1}{2}}\right)^{2} \sum _{{n=1}}^{\infty }\left({\frac  {a_{1}+a_{2}+\cdots +a_{n}}{n}}\right)^{2},
\end{equation*}
where $\{a_n\}_{n=1}^\infty$ is any sequence of non-negative real numbers.
\end{abstract}

In 1921 Landau wrote a letter to Hardy including a proof of the inequality
\begin{equation}\label{11}
\sum _{{n=1}}^{\infty }a_{n}^{p}\geq \left(\frac{p-1}{p}\right)^{p} \sum _{{n=1}}^{\infty }\left({\frac  {a_{1}+a_{2}+\cdots +a_{n}}{n}}\right)^{p},
\end{equation}
where $\{a_n\}_{n=1}^\infty$ is any sequence of non-negative real numbers and $1<p<\infty$.
This inequality was stated before by Hardy, and therefore, it is called a \emph{Hardy
inequality} (see \cite{KMP} for a marvelous
description on the prehistory of the celebrated Hardy inequality). Since then Hardy type inequalities have received an
enormous amount of attention.

Let $C_c(\N)$ be the space of finitely supported functions on $\N\!:=\! \{1,2,3,\ldots\}$. Inequality \eqref{11} is clearly equivalent to the following inequality
\begin{align}\label{CHI}
 \sum_{n=1}^{\infty}|\ph(n)-\ph(n-1)|^{p}\ge
\left(\frac{p-1}{p}\right)^{p}\sum_{n=1}^{\infty}\frac{|\ph(n)|^{p}}{n^{p}}
\end{align}
for all $\ph\in C_c(\N)$ where we set $\ph(0)=0$ as a ``Dirichlet boundary condition''.


The goal of this note is to prove the following improvement
of the classical Hardy inequality \eqref {CHI} for the case $p=2$.

\begin{theorem} \label{thm:MAIN}
Let $\varphi\in C_c(\N)$ and $\varphi(0)=0$. Then,
\begin{equation}\label{2}
\sum_{n=1}^{\infty} \big( \varphi(n) -\varphi(n-1) \big)^2 \geq \sum_{n=1}^{\infty} w(n)\varphi(n)^2,
\end{equation}
where $w$ is a strictly positive function on $\mathbb{N}$ given by
\begin{eqnarray*}
w(n)=
         \sum_{k=1}^{\infty} \!\!\binom{4k}{2k} \frac{1}{(4k-1)2^{4k-1}} \frac{1}{n^{2k}}
= \frac{1}{4n^2} + \frac{5}{64n^4} + \frac{21}{512n^6} \!+\!\ldots,
  \end{eqnarray*}
for $ n\ge 2 $ and $ w(1)=
2-\sqrt{2} $.
In particular, $w(n)$ is strictly greater than the classical Hardy weight $w_H(n):=1/(2n)^2$, $ n\in\mathbb{N} $.
\end{theorem}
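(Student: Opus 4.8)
\emph{Strategy.} The plan is to apply the discrete \emph{ground state transform} with the function $u(n):=\sqrt{n}$ on $\N_0:=\{0,1,2,\dots\}$, the natural discrete analogue of the function $x\mapsto\sqrt{x}$ that extremises the continuous Hardy inequality. Let $H$ denote the difference operator $(Hf)(n):=2f(n)-f(n-1)-f(n+1)$ attached to the left-hand side of \eqref{2}, with the convention $f(0)=0$, and set $w(n):=(Hu)(n)/u(n)$. For $n\ge2$ this equals
\begin{equation*}
w(n)=\frac{2\sqrt{n}-\sqrt{n-1}-\sqrt{n+1}}{\sqrt{n}}=2-\sqrt{1-\tfrac1n}-\sqrt{1+\tfrac1n},
\end{equation*}
while at the boundary vertex $n=1$ we read off $w(1)=(Hu)(1)/u(1)=2-\sqrt{2}$ (using $u(0)=0$, $u(1)=1$, $u(2)=\sqrt2$). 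Strict concavity of $t\mapsto\sqrt t$ yields $\sqrt{n-1}+\sqrt{n+1}<2\sqrt n$, hence $w(n)>0$ for every $n\ge1$.

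\emph{The transform identity.} For $\varphi\in C_c(\N)$ with $\varphi(0)=0$ put $\psi(n):=\varphi(n)/\sqrt n$ for $n\ge1$. The core step is the algebraic identity
\begin{equation*}
\sum_{n=1}^{\infty}\big(\varphi(n)-\varphi(n-1)\big)^2-\sum_{n=1}^{\infty}w(n)\varphi(n)^2=\sum_{n=2}^{\infty}\sqrt{n(n-1)}\,\big(\psi(n)-\psi(n-1)\big)^2 .
\end{equation*}
To prove it, write $\varphi(n)-\varphi(n-1)=\sqrt n\,\psi(n)-\sqrt{n-1}\,\psi(n-1)$, expand the square, and subtract $\sqrt{n(n-1)}\big(\psi(n)-\psi(n-1)\big)^2$: what is left is $\big(n-\sqrt{n(n-1)}\big)\psi(n)^2+\big(n-1-\sqrt{n(n-1)}\big)\psi(n-1)^2$. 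Summing over $n\ge1$ and shifting the index in the second family (whose $n=1$ term vanishes, since it carries $\psi(0)$ with coefficient $0-\sqrt0$) recombines the diagonal to $\sum_{n\ge1}\big(2n-\sqrt{n(n-1)}-\sqrt{n(n+1)}\big)\psi(n)^2=\sum_{n\ge1}w(n)\varphi(n)^2$, while the $n=1$ term on the right is $0$ because $\sqrt{1\cdot0}=0$. Since $\varphi$ has finite support all sums are finite; as the right-hand side is a sum of non-negative terms, \eqref{2} follows.

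\emph{Closed form of $w$ and comparison with $w_H$.} Expanding $\sqrt{1\pm x}=\sum_{j\ge0}\binom{1/2}{j}(\pm x)^j$ for $|x|<1$ at $x=1/n$ gives $w(n)=-2\sum_{k\ge1}\binom{1/2}{2k}n^{-2k}$, and the elementary identity
\begin{equation*}
\binom{1/2}{m}=\frac{(-1)^{m-1}}{4^{m}(2m-1)}\binom{2m}{m}\qquad(m\ge1)
\end{equation*}
(a one-line induction) with $m=2k$ turns this into $w(n)=\sum_{k\ge1}\binom{4k}{2k}\tfrac{1}{(4k-1)2^{4k-1}}n^{-2k}$, as claimed, with leading coefficients $\tfrac14,\tfrac5{64},\tfrac{21}{512}$; the series converges for $n\ge2$ since its radius of convergence in $1/n$ equals $1$. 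Its $k=1$ term is $\binom{4}{2}/(3\cdot2^{3}n^{2})=1/(4n^{2})=w_H(n)$ and all further terms are strictly positive, so $w(n)>w_H(n)$ for $n\ge2$; and $w(1)=2-\sqrt2>\tfrac14=w_H(1)$. Thus \eqref{2} with $w$ weakened to $w_H$ recovers \eqref{CHI} for $p=2$, so Theorem \ref{thm:MAIN} is a genuine improvement.

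\emph{Expected obstacle.} No step is deep once the supersolution $u(n)=\sqrt n$ has been found; the only delicate issue is that $u$ is not strictly positive on all of $\N_0$ (it vanishes at $0$). One must therefore verify, as above, that the transform identity nonetheless holds --- which works because the problematic terms carry the vanishing weight $\sqrt{1\cdot0}$ --- and compute the boundary weight $w(1)=2-\sqrt2$ separately. Everything else, completing the square and the combinatorial evaluation of the Taylor coefficients, is routine.
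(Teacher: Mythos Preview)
Your proof is correct and follows essentially the same route as the paper: both choose the supersolution $u(n)=\sqrt{n}$, set $w=\Delta u/u$, and use the ground state transform to exhibit the difference of the two sides of \eqref{2} as the nonnegative form $h_u(\varphi/u)=\sum_{n\ge2}\sqrt{n(n-1)}\,(\psi(n)-\psi(n-1))^2$. The only cosmetic difference is that the paper packages the transform abstractly (Lemmas~\ref{lemma:unitary}--\ref{lemma:formU} and Proposition~\ref{prop:AUX}), whereas you verify the identity by a direct expand-and-reindex computation; the content is the same, and your handling of the boundary term at $n=1$ matches the paper's convention $u(0)=0$.
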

\begin{rem}
In \cite{KPP} we show that \eqref{2} cannot be improved and is optimal in a certain sense.
In particular, there is no function $\widetilde{w} \gneqq w$ such that~\eqref{2} holds with $\widetilde{w}$
instead of $w$.
\end{rem}
A nonzero mapping $u:\N \to \R$ taking non-negative values is called
 {\em positive} and we write $u \geq 0$ in this case.
If $u(n) > 0$ for all $n \in \N$, we call
$u$ a {\em weight} and we set $ u(0)=0 $.
For  weights $u$ we write $\ell^2(\N, u)$ for
the space of all functions $f:\N \to \R$ such that $\sum_{n=1}^\infty f(n)^2 u(n)<\infty$.  Clearly, $\ell^2(\N, u)$ equipped with the scalar product
\begin{align*}
\langle{f},g\rangle_{u}:=\sum_{n=0}^\infty f(n)g(n)u(n)
\qquad f,g \in \ell^2(\N, u)
\end{align*}
is a Hilbert space with the induced norm $\|\cdot\|_{u}$.   We clearly have:
\begin{lemma} \label{lemma:unitary}
For any weight $u:\N \to (0,\infty)$, the operator
$T_u : \ell^2(\N,u^2) \to \ell^2(\N)$, given by $(T_u\varphi)(n):= u(n) \varphi(n)$
is unitary.
\end{lemma}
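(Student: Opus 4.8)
The plan is to verify directly that $T_u$ is a bijective linear isometry; since a surjective linear isometry between Hilbert spaces is, by definition, unitary, this is all that is needed. Linearity of $T_u$ is immediate from the pointwise formula $(T_u\varphi)(n) = u(n)\varphi(n)$.

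First I would check the isometry property. For $\varphi \in \ell^2(\N, u^2)$, recalling the convention $u(0)=0$ (so that the $n=0$ term drops out of the defining inner products) and $\varphi(0)=0$, one computes
\[
\|T_u\varphi\|_{\ell^2(\N)}^2 = \sum_{n=1}^{\infty}\big(u(n)\varphi(n)\big)^2 = \sum_{n=1}^{\infty} u(n)^2\varphi(n)^2 = \|\varphi\|_{u^2}^2 .
\]
Hence $T_u$ indeed maps $\ell^2(\N,u^2)$ into $\ell^2(\N)$, preserves norms, and in particular is bounded and injective.

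For surjectivity, given $f\in\ell^2(\N)$ define $\varphi(n):=f(n)/u(n)$ for $n\ge1$, which makes sense because $u(n)>0$, and $\varphi(0):=0$. Then $\sum_{n=1}^{\infty}u(n)^2\varphi(n)^2 = \sum_{n=1}^{\infty}f(n)^2<\infty$, so $\varphi\in\ell^2(\N,u^2)$ and $T_u\varphi=f$; equivalently, the inverse of $T_u$ is $T_{1/u}$. Thus $T_u$ is a surjective linear isometry, i.e.\ unitary. One may also phrase this via the adjoint: a short computation gives $\langle T_u\varphi, f\rangle_{\ell^2(\N)} = \langle \varphi, T_{1/u}f\rangle_{u^2}$ for all $\varphi\in\ell^2(\N,u^2)$, $f\in\ell^2(\N)$, so $T_u^{*} = T_{1/u} = T_u^{-1}$.

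There is no substantive obstacle here; the only point worth a word of care is the bookkeeping around the boundary convention $u(0)=0$ and $\varphi(0)=0$, which is what lets all the sums above be taken from $n=1$ and which ensures $\varphi\in\ell^2(\N,u^2)$ is well defined including at $n=0$.
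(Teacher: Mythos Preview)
Your proof is correct: verifying that $T_u$ is a surjective linear isometry is exactly the standard (and essentially only) argument here. The paper itself does not supply a proof of this lemma at all---it simply prefaces the statement with ``We clearly have''---so your write-up just makes explicit the routine computation the authors left to the reader.
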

We consider $\mathbb{N}_0=\{0,1,2,\ldots\}$ as the standard graph on the non-negative integers. Note that here $ 0 $ is added as a ``boundary point'' of $ \mathbb{N} $.
We say that $n,m \in \N_0$ are
{\em connected by an edge} (and we write $n \sim m$) if $|n-m| = 1$.

Given a weight $u$,
the {\em combinatorial Laplacian} $\Delta_u$ associated with $u$
is given by
\[
\Delta_{u} \varphi(n) := \frac{1}{u(n)^2} \sum_{m \sim n} u(n)u(m)\big( \varphi(n) - \varphi(m) \big) \qquad n\ge1,
\]
where $\varphi$ is an arbitrary function on $\N$ and we set $ \varphi(0)=0 $ (as we set $ u(0)=0 $ above). This can be understood as a ``Dirichlet boundary condition'' at $ n=0 $.
The operator $\Delta := \Delta_{1}$ is the standard discrete combinatorial
Laplace operator corresponding to the constant weight $u=1$. 

\begin{lemma}[Ground state transform] \label{lemma:GST}
Let $u$ be a weight, and let $w$ be a function on $\N$. If $(\Delta- w) u = 0$ on $\N$, then
\[
T_u^{-1}\,(\Delta-w)\, T_u = \Delta_{u} \quad \mbox{ on }\; C_{c}(\N).
\]
\end{lemma}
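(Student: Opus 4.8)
The plan is to prove the operator identity by a pointwise computation: fix $\varphi\in C_c(\N)$, evaluate $T_u^{-1}(\Delta-w)T_u\varphi$ at an arbitrary $n\ge 1$, and check that it equals $\Delta_u\varphi(n)$. Since $(T_u\varphi)(n)=u(n)\varphi(n)$ and $(T_u^{-1}f)(n)=f(n)/u(n)$ (note $T_u$ maps $C_c(\N)$ into $C_c(\N)$, so all the sums below are finite and the composition is well defined), the task reduces to computing $(\Delta-w)(u\varphi)(n)$ and dividing by $u(n)$.

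The key algebraic ingredient is a discrete ``product rule'' for $\Delta=\Delta_1$. For $m\sim n$ write
\[
u(n)\varphi(n)-u(m)\varphi(m)=u(m)\big(\varphi(n)-\varphi(m)\big)+\varphi(n)\big(u(n)-u(m)\big),
\]
and sum over the neighbours $m\sim n$ to obtain
\[
\Delta(u\varphi)(n)=\sum_{m\sim n}u(m)\big(\varphi(n)-\varphi(m)\big)+\varphi(n)\,\Delta u(n).
\]
Now I would invoke the hypothesis: $(\Delta-w)u=0$ on $\N$ says exactly $\Delta u(n)=w(n)u(n)$, so the last term equals $w(n)u(n)\varphi(n)=w(n)(u\varphi)(n)$ and cancels the $-w$ contribution. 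Hence $(\Delta-w)(u\varphi)(n)=\sum_{m\sim n}u(m)(\varphi(n)-\varphi(m))$, and dividing by $u(n)$ gives $\frac{1}{u(n)}\sum_{m\sim n}u(m)(\varphi(n)-\varphi(m))$, which is precisely $\Delta_u\varphi(n)$ by definition of the combinatorial Laplacian.

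The one point requiring genuine care — the closest thing to an obstacle in an otherwise routine verification — is the treatment of the ``boundary'' vertex $n=0$. At $n=1$ the neighbour $m=0$ enters every sum, and one must check that the conventions $u(0)=0$ and $\varphi(0)=0$ make all $m=0$ contributions vanish consistently: in the product-rule splitting ($u(0)(\varphi(1)-\varphi(0))+\varphi(1)(u(1)-u(0))=u(1)\varphi(1)$, as it should), in the expression for $\Delta u(1)$, and in the defining sum for $\Delta_u\varphi(1)$ (where the factor $u(1)u(0)=0$ kills the $m=0$ term). Once this bookkeeping is checked, the identity holds at every $n\ge 1$, which is the assertion on $C_c(\N)$.
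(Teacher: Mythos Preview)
Your proof is correct and follows essentially the same route as the paper: both arguments evaluate $\Delta(u\varphi)(n)$ pointwise, split $u(n)\varphi(n)-u(m)\varphi(m)$ via the discrete product rule, invoke $(\Delta-w)u=0$ to eliminate the $\varphi(n)\Delta u(n)$ term, and recognise the remaining sum as $u(n)\Delta_u\varphi(n)$. Your explicit discussion of the $m=0$ boundary bookkeeping is a welcome addition that the paper leaves implicit in its conventions.
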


\begin{proof}
We compute for $n \geq 1$,
\begin{multline*}
\Delta\,T_u\varphi(n) = \sum_{m \sim n} \big( u(n)\varphi(n) - u(m)\varphi(m) \big) \\
= (\Delta - w) u(n) \varphi(n) +    u(n)w(n)\varphi(n)
 + \sum_{m \sim n} u(m)\big( \varphi(n)-\varphi(m) \big) \\
= 0 + w(n)T_u\varphi(n) + \big(T_u \Delta_{u} \varphi \big)(n).\qedhere
\end{multline*}
\end{proof}
For a weight $u$ on $\N$, we define a quadratic
form $h_u$ on the space $C_c(\N)$, by
\[
h_u (\varphi) := \frac{1}{2} \sum_{n \in \N} \sum_{m \sim n} u(n)u(m) \big( \varphi(n)-\varphi(m) \big)^2 \qquad
\varphi \in C_c(\N),
\]
again with the ``Dirichlet boundary condition'' $ \varphi(0)=0 $ and $ u(0)=0 $.
For the constant weight $u=1$, we write $h:= h_1$. A direct calculation connects the form $h_u$ and $\Delta_u$ through the inner product on $\ell^2(\N,u^2)$.
\begin{lemma}[Green formula] \label{lemma:formU}
For a weight $u$ on $\N$,
we have
\[
\langle \Delta_{u} \varphi, \varphi \rangle_{u^2}=h_u (\varphi)\geq 0 \qquad \varphi \in C_c(\N).
\]
\end{lemma}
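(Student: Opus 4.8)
The plan is to prove the identity by a discrete summation-by-parts (Abel summation) followed by a symmetrization of the resulting double sum; the inequality $h_u(\varphi)\ge 0$ will then be immediate. Since $\varphi\in C_c(\N)$, the function $\Delta_u\varphi$ is again finitely supported, so all the sums below are finite and every rearrangement is legitimate; there are no convergence issues to worry about.

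First I would unfold the left-hand side using the definition of the inner product on $\ell^2(\N,u^2)$ together with the definition of $\Delta_u$:
\[
\langle \Delta_u\varphi,\varphi\rangle_{u^2}
= \sum_{n\in\N} u(n)^2\,(\Delta_u\varphi)(n)\,\varphi(n)
= \sum_{n\in\N} \varphi(n) \sum_{m\sim n} u(n)u(m)\big(\varphi(n)-\varphi(m)\big).
\]
Here adding the index $n=0$ would change nothing, since $\varphi(0)=0$ (and $u(0)=0$), and the neighbour $m=0$ contributes nothing because $u(0)=0$; thus the double sum effectively runs over ordered pairs $(n,m)$ of adjacent vertices with $n,m\ge 1$. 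Write $\sum_{n\sim m}$ for such a sum over ordered pairs.

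Next I would symmetrize. The index set $\{(n,m):n\sim m\}$ is invariant under interchanging $n$ and $m$, and the weight factor $u(n)u(m)$ is symmetric, so relabeling gives
\[
\sum_{n\sim m} u(n)u(m)\big(\varphi(n)-\varphi(m)\big)\varphi(n)
= \sum_{n\sim m} u(n)u(m)\big(\varphi(m)-\varphi(n)\big)\varphi(m).
\]
Averaging the two expressions and using the elementary identity $(a-b)a+(b-a)b=(a-b)^2$ with $a=\varphi(n)$, $b=\varphi(m)$, I obtain
\[
\langle \Delta_u\varphi,\varphi\rangle_{u^2}
= \frac12\sum_{n\sim m} u(n)u(m)\big(\varphi(n)-\varphi(m)\big)^2
= \frac12\sum_{n\in\N}\sum_{m\sim n} u(n)u(m)\big(\varphi(n)-\varphi(m)\big)^2
= h_u(\varphi),
\]
which is the claimed identity. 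Finally, each summand on the right is a product of the positive numbers $u(n),u(m)$ with the square $(\varphi(n)-\varphi(m))^2\ge 0$, hence $h_u(\varphi)\ge 0$.

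The argument presents no serious obstacle; the only point requiring a little care is the bookkeeping at the boundary vertex $0$, namely checking that the conventions $\varphi(0)=0$ and $u(0)=0$ are consistently handled when one includes or drops that index in each of the sums, and recording that the finite support of $\varphi$ makes every (double) sum absolutely convergent so that the symmetrization step is valid.
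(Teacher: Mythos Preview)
Your proof is correct and follows essentially the same route as the paper: both expand the inner product and exploit the symmetry of the edge relation to produce the square $(\varphi(n)-\varphi(m))^2$. The only cosmetic difference is that the paper writes $\varphi(n)=(\varphi(n)-\varphi(m))+\varphi(m)$ and recognises the remainder as $-\langle \Delta_u\varphi,\varphi\rangle_{u^2}$, whereas you average the original sum with its $(n\leftrightarrow m)$-relabeled copy; the underlying identity is the same.
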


\begin{proof}
The proof follows by a direct calculation. Indeed,
\begin{multline*}
\langle \Delta_{u} \varphi,\varphi \rangle_{u^2} = \sum_{n \in \N} u^2(n)\,\Delta_{u}\varphi(n)\varphi(n) \\
= \!\!\sum_{n \in \N}\! \sum_{m \sim n}\!\! u(n)u(m) \big( \varphi(n)\!-\!\varphi(m) \big)\!^2
 \!\!+\!\! \sum_{n \in \N} \sum_{m \sim n} u(n)u(m)\big( \varphi(n)\!-\!\varphi(m) \big)\varphi(m) \\
= 2h_u(\varphi) - \langle \Delta_{u}\varphi, \varphi \rangle_{u^2}. \qedhere
\end{multline*}
\end{proof}
\begin{pro} \label{prop:AUX}
Let $u$ be a weight and suppose that $w \geq 0$ on $\N$. If
$(\Delta-w)u = 0$ on $\N$, then $h(\varphi) \geq \|\varphi\|_w^2$
for all $\varphi \in C_c(\N)$ with $\varphi(0)=0$.
\end{pro}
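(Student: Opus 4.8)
The plan is to conjugate the operator $\Delta-w$ by the unitary $T_u$ so as to turn it into the manifestly nonnegative operator $\Delta_u$, using the hypothesis $(\Delta-w)u=0$ precisely to annihilate the zeroth-order term. First I would observe that, by the Green formula (Lemma~\ref{lemma:formU}) applied to the constant weight $u\equiv1$ together with the definition of $\|\cdot\|_w$,
\[
h(\varphi)-\|\varphi\|_w^2=\langle\Delta\varphi,\varphi\rangle_{\ell^2(\N)}-\sum_{n\in\N}w(n)\varphi(n)^2=\langle(\Delta-w)\varphi,\varphi\rangle_{\ell^2(\N)},
\]
so that it suffices to show $\langle(\Delta-w)\varphi,\varphi\rangle_{\ell^2(\N)}\ge0$ for every $\varphi\in C_c(\N)$ with $\varphi(0)=0$.

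Next I would set $\psi:=T_u^{-1}\varphi$, that is $\psi(n)=\varphi(n)/u(n)$; since $\varphi$ has finite support and $u$ is everywhere positive, $\psi$ again lies in $C_c(\N)$ and obeys $\psi(0)=0$. Writing $\varphi=T_u\psi$ and using that $T_u$ is unitary (Lemma~\ref{lemma:unitary}), so that $T_u^{-1}$ is its adjoint between the Hilbert spaces $\ell^2(\N,u^2)$ and $\ell^2(\N)$, I can move one factor $T_u$ across the inner product:
\[
\langle(\Delta-w)\varphi,\varphi\rangle_{\ell^2(\N)}=\langle(\Delta-w)T_u\psi,T_u\psi\rangle_{\ell^2(\N)}=\langle T_u^{-1}(\Delta-w)T_u\psi,\psi\rangle_{u^2}.
\]
At this point the hypothesis $(\Delta-w)u=0$ on $\N$ lets me invoke the ground state transform (Lemma~\ref{lemma:GST}), which identifies $T_u^{-1}(\Delta-w)T_u$ with $\Delta_u$ on $C_c(\N)$; applying the Green formula (Lemma~\ref{lemma:formU}) once more, now for the weight $u$, the last expression equals $\langle\Delta_u\psi,\psi\rangle_{u^2}=h_u(\psi)\ge0$, which closes the chain and proves the proposition.

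I do not expect a real obstacle here: once the right objects are lined up the argument is purely formal. The only care needed is bookkeeping — checking that $\psi\in C_c(\N)$ so that every sum in sight is a finite sum and no convergence issue arises, keeping the Dirichlet convention $\psi(0)=\varphi(0)=0$ consistent throughout, and making sure the inverse/adjoint of $T_u$ is taken with respect to the correct pair of inner products $\langle\cdot,\cdot\rangle_{\ell^2(\N)}$ and $\langle\cdot,\cdot\rangle_{u^2}$. It is also worth noting that the assumption $w\ge0$ plays no role in the chain of identities above; it is natural to record it since in the intended application $w$ is a genuine positive weight and one wants the right-hand side $\|\varphi\|_w^2$ to be an honest nonnegative quantity.
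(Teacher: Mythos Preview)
Your proposal is correct and follows essentially the same route as the paper's own proof: both reduce to showing $\langle(\Delta-w)\varphi,\varphi\rangle\ge0$, conjugate by $T_u$ using Lemmas~\ref{lemma:unitary} and~\ref{lemma:GST}, and finish with the Green formula of Lemma~\ref{lemma:formU} to identify the result with $h_u(\varphi/u)\ge0$. Your additional bookkeeping remarks (that $\psi\in C_c(\N)$, the role of the Dirichlet convention, and that $w\ge0$ is not used in the chain of identities) are accurate and do no harm.
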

\begin{proof}
Using Lemma~\ref{lemma:unitary}, \ref{lemma:GST}, and \ref{lemma:formU} we obtain for all $\varphi \in C_c(\N)$,
\begin{multline*}
\langle (\Delta - w) \varphi,\varphi \rangle = \langle T_u^{-1}(\Delta- w) T_u T_u^{-1}\varphi,
T_u^{-1}\varphi \rangle_{u^2}\\
= \Big\langle \Delta_u \frac{\varphi}{u}, \frac{\varphi}{u} \Big\rangle_{u^2}=h_u \left(\frac{\varphi}{u}\right)\geq 0. \qedhere
\end{multline*}
\end{proof}
We are now in a position to prove the main theorem of this note.
\begin{proof}[Proof of Theorem~\ref{thm:MAIN}]
We define the function $w:\N \to \R$ as
\[
w(n):= \frac{\Delta n^{1/2}}{n^{1/2}} \qquad n \geq 1.
\]
 Applying the definition of the combinatorial Laplacian, we arrive
at
\begin{eqnarray*}
w(n) = 2- \Bigg( \Big( 1 + \frac{1}{n} \Big)^{1/2} + \Big(1-\frac{1}{n} \Big)^{1/2}  \Bigg)
\end{eqnarray*}
for $n \geq 1$. The Taylor expansion of the square root at $1$ gives for $n\ge2$
\begin{eqnarray*}
\Big( 1 \pm \frac{1}{n} \Big)^{1/2}\!\! =\! \sum_{k=0}^{\infty} \binom{1/2}{k} \Big( \frac{\pm 1}{n}\Big)^k
\!\!=\! 1 \pm \frac{1}{2n} - \frac{1}{8n^2} \pm \frac{1}{16n^3} - \frac{5}{128n^4} \pm \dots,
\end{eqnarray*}
and thus, it leads to the claimed series representation for $w(n)$, i.e.\@
\[
w(n) = -\sum_{k=1}^{\infty} \binom{1/2}{2k} \frac{2}{n^{2k}} = \sum_{k=1}^{\infty} \binom{4k}{2k}
\frac{1}{(4k-1)2^{4k-1}} \frac{1}{n^{2k}} \qquad n \ge2.
\]
Note that $w(1) = 2-\sqrt{2}$. So, $w >w_H> 0$ on $\N$.
By construction, we have $(\Delta-w)\,u = 0$ on $\N$ for the weight $u(n):=n^{1/2}$, $ n\ge1 $.
The validity of the Hardy inequality now follows from Proposition~\ref{prop:AUX}.
\end{proof}
 \begin{center}{\bf Acknowledgments} \end{center}
M.~K. acknowledges the financial support of the German Science Foundation. Y.~P. and F.~P. acknowledge the support of the Israel Science Foundation (grants No. 970/15) founded by the Israel Academy of Sciences and Humanities. Furthermore, F.~P. is grateful for support through a Technion Fine Fellowship.


\begin{thebibliography}{1}
\bibitem{KPP} M.~Keller, Y. Pinchover, F.~Pogorzelski, Optimal Hardy inequalities for Schr\"odinger operators on graphs, preprint, 2016, arXiv: 1612.04051.

\bibitem{KMP} A.~Kufner, L.~Maligranda, L.-E.~Persson, The prehistory of the {H}ardy inequality, {\em Amer. Math. Monthly}, {\bf 113(8)} (2006) 715--732.
\end{thebibliography}
\end{document}